\newtheorem{thrm}{Theorem}[section]
\newtheorem{lem}[thrm]{Lemma}
\newtheorem{prop}[thrm]{Proposition}
\newtheorem{cor}[thrm]{Corollary}
\theoremstyle{definition}
\newtheorem{definition}[thrm]{Definition}
\newtheorem{remark}[thrm]{Remark}
\numberwithin{equation}{section}
\newcommand{\End}{\text\rm{End}}
\newcommand{\Inj}{\text\rm{Inj}}
\DeclareMathOperator{\Span}{span}
\numberwithin{equation}{section}
\title[Linear recurrence sequences]{Some algebraic results concerning linear recurrence sequences}
\author{\bf M. ~MOU\c{C}OUF}
\date{}
\keywords{graded semirings, Linear recurrence sequences, Hadamard product, Harwitz product}
\begin{document}
\maketitle
\begin{center}
{\footnotesize Department of Mathematics, Faculty of Science,\\Chouaib Doukkali University, Morocco\\
Email: moucouf@hotmail.com}
\end{center}
\begin{abstract}
 We study the set $\mathcal{L}_{F}$ of all $F$-vector spaces $L(P)$ where $P$ is monic and splits over $F$ and $L(Q)$ denotes the set of linear recurrence sequences over $F$ with characteristic polynomial $Q$. We show that $\mathcal{L}_{F}$ can be endowed with two structures of graded commutative semiring. This study allows us to obtain, in compact forms, the polynomial $P,Q\in F[X]$ such that $L(P)=\prod_{i=1}^{m}L(P_{i})$ and $L(Q)=L(P_{1})\ast\cdots\ast L(P_{m})$, where $P_{1}, \ldots, P_{m}$
 are any monic polynomials over $F$.
\end{abstract}
%
%
\section{Introduction}
%
Let $F$ be a field and let $\mathcal{C}_{F}$ be the set of all sequences $\pmb{s}=(s_{n})_{n\geqslant 0}$ over $F$. Let us consider the following operations:
\begin{itemize}
\item componentwise sum $+$, defined by
$$\pmb{c}+\pmb{d}=\pmb{a},\quad a_{n}=c_{n}+d_{n},\quad \forall n\in \mathbb{N};$$
\item Hadamard product $.$, defined by
$$\pmb{c}\pmb{d}=\pmb{c}.\pmb{d}=\pmb{a},\quad a_{n}=c_{n}d_{n},\quad \forall n\in \mathbb{N};$$
\item Hurwitz product $\ast$, defined by
$$\pmb{c}\ast\pmb{d}=\pmb{a},\quad a_{n}=\sum_{i=0}^{n}\binom{n}{i}c_{i}d_{n-i},\quad \forall n\in \mathbb{N}.$$
\end{itemize}
It is well known that $(\mathcal{C}_{F},+,.)$ and $(\mathcal{C}_{F},+,\ast)$ are commutative rings with the same additive identity $\pmb{0}=(0,0,0\ldots)$
and with multiplicative identities $\pmb{1}=(1,1,1,\ldots)$ and
$\pmb{1^{\ast}}=(1,0,0,\ldots)$, respectively.
 ~\\ \indent
Let $\widetilde{\mathcal{C}_{F}}$ be the set of all $F$-subspaces of $\mathcal{C}_{F}$.It is easily seen that
\begin{enumerate}[label=(\alph*)]
 \item $(\widetilde{\mathcal{C}_{F}},+,.)$ is a commutative semiring, in which $\Span\{\pmb{0}\}$ and $\Span\{\pmb{1}\}$ are the additive identity and
 the multiplicative identity respectively, where $H+H'$ is the sum of the subspaces $H$ and $H'$, and $HH'$ is the subspace of $\mathcal{C}_{F}$ spanned
 by all Hadamard products $\pmb{c}\pmb{d}$ with $\pmb{c}\in H$ and $\pmb{d}\in H'$.
 \item $(\widetilde{\mathcal{C}_{F}},+,\ast)$ is a commutative semiring, in which $\Span\{\pmb{0}\}$ and $\Span\{\pmb{1^{\ast}}\}$ are the additive identity
 and the multiplicative identity respectively, where $H\ast H'$ is the subspace of $\mathcal{C}_{F}$ spanned
 by all Hurwitz products $\pmb{c}\ast\pmb{d}$ with $\pmb{c}\in H$ and $\pmb{d}\in H'$.
\end{enumerate}
~\indent It is well known that the set of all linear recurrence sequences can be endowed with several interesting algebraic
structures~\cite{Alecci, Cerrut, Chin, Kura1, Kura2, Lars, Peters}, and that the set $L(P)$ of all linear recurrence sequences over $F$ having $P$ as a characteristic polynomial is a $F$-vector space of dimension $\deg(P)$.
\\~\indent In this paper we discuss some algebraic properties of the subset $\mathcal{L}_{F}$ of $\widetilde{\mathcal{C}_{F}}$ consisting of all $L(P)$ where $P$ is monic and splits over $F$. it is well-known that For all $P,Q\in F[C]$, $L(P)+L(Q)=L(H)$ where $H$ is the least common multiple of $P$ and $Q$. From this it follows clearly that $(\mathcal{L}_{F},+)$ is a commutative monoid. In fact, we have more than this. we will show that $(\mathcal{L}_{F},+,.)$
and $(\mathcal{L}_{F},+,\ast)$ are, respectively, subsemirings of $(\widetilde{\mathcal{C}_{F}},+,.)$ and $(\widetilde{\mathcal{C}_{F}},+,\ast)$ graded by
the multiplicative monoid $F$. In order to prove these we define a new commutative semiring structure on the set of nonnegative integers $\mathbb{N}$.
The addition in $\mathbb{N}$ is defined to be the the maximum $\vee$ of two integers and the multiplication, $\wedge$, is the disjunction operation introduced
in~\cite{Zier} for positive integers and extended to $\mathbb{N}$ by setting $0\wedge n=n\wedge 0=0$ for all $n\in \mathbb{N}$.
~\\ \indent
In addition, we show, with the aid of this result and others presented in this paper, that if \begin{equation*}
P_{1}=X^{s_{1}}Q_{1},\ldots,P_{m}=X^{s_{m}}Q_{m}
\end{equation*}
are monic polynomials over $F$, where $Q_{i}(0)\neq 0,\, 1\leq i\leq m$, then
\begin{enumerate}[label=(\arabic*)]
\item \begin{equation*}
\prod_{i=1}^{m}L(P_{i})=L(X^{\rho}\Upsilon(Q_{1},\ldots,Q_{m})),
\end{equation*}
where
\begin{eqnarray*}
\rho=\begin{cases}
\min\{s_{i}/i\in \Theta\}&\quad\text{if}\quad \Theta\neq\emptyset\\
\max\{s_{i}/1\leq i\leq m\}& \quad\text{otherwise}
\end{cases}
\end{eqnarray*}
and
\begin{equation*}
\Theta=\{i/Q_{i}=1\}.
\end{equation*}
\item \begin{equation*} L(P_{1})\ast\cdots\ast L(P_{m})=L(\Psi(P_{1},\ldots,P_{m})).\end{equation*}
\end{enumerate}
$\Upsilon(Q_{1},\ldots,Q_{m})$ and $\Psi(P_{1},\ldots,P_{m})$ are polynomials over $F$ that we determine in terms of the roots of the polynomials
$P_{1},\ldots,P_{m}$ in an algebraic closure of $F$.
\\Results $(1)$ and $(2)$ above however are not new and, in many works, They are obtained under identical hypothesis or a bit more restrictive one using
either direct methods, or a Hopf Algebra approach.
\cite{Buck, Cak, Cerrut, Chin, Nied, Gott, Kura1, Kura2, Selm, Zier, Zya}. Our approach, however, gives more detailed information and allows obtaining
results $(1)$ and $(2)$ in a compact form to facilitate their application in future works. furthermore, we end the paper by discussing results obtained by Chin
and Goldman~\cite{Chin}. The authors of the aforementioned paper have derived several important results. However, in Remark~\ref{remer} we will see that
the main result in~\cite{Chin} is not correct, and also how this can be corrected.
\\~\indent It is known that there are various definitions of a semiring
in the literature. In this paper, We use that taken from the well-known book by J. Golan~\cite{Golan}.
A nonempty set $S$ together with two associative binary operations $+$ and $.$ will be called a commutative semiring provided:
\begin{enumerate}[label=(\roman*)]
\item $(S, +)$ is a commutative monoid with neutral element $0$ such that $0.a=0$ for all $a\in S$,
\item $(S, .)$ is a commutative monoid with neutral elemnt $1\,(\neq0)$,
\item $.$ is distributive over $+$.
\end{enumerate}
\subsection{Notations}\hfill\\
Throughout the paper, we use the following notations:
\begin{itemize}
\item $F$ is a field and $\mathcal{C}_{F}$ is the set of all sequences $\pmb{s}=(s_{n})_{n\geqslant 0}$ over $F$. It is well known that $\mathcal{C}_{F}$ is
an $F$-algebra under componentwise addition, multiplication and scalar multiplication;
\item $\widetilde{\mathcal{C}_{F}}$ be the set of all subspaces of $\mathcal{C}_{F}$;
\item Sequences in this paper are written in bold symbol;
\item $\binom{n}{i}$ denotes the binomial coefficient considered as an element of $F$;
\item $\pmb{\Lambda}_{i}$, $i\geqslant 0$, is the element $(\binom{n}{i})_{n\geq 0}$ of $\mathcal{C}_{F}$;
\item $\pmb{0}_{i}$ is the sequence of $\mathcal{C}_{F}$ defined by $\pmb{0}_{i}(n)=\delta_{in}$;
\item Let $\lambda \in F^{\ast}=F\backslash\{0\}$, then $\pmb{\lambda}$ is the geometric sequence $(\lambda^{n})_{n\geq 0}$;
\item Let $\lambda \in F^{\ast}=F\backslash\{0\}$ and $s\in \mathbb{N}$, then\\
    $\begin{array}{llll}
    \langle\lambda\rangle_{s}&=&\Span\{\pmb{\lambda}\Lambda_{0},\ldots,\pmb{\lambda}\Lambda_{s-1}\}&\quad\text{if}\quad s\neq0\\
    \langle0\rangle_{s}&=&\Span\{\pmb{0}_{0},\ldots,\pmb{0}_{s-1}\}&\quad\text{if}\quad s\neq0\\
    \langle\lambda\rangle_{0}&=&\langle0\rangle_{0}=\Span\{\pmb{0}\};&
    \end{array}$
\item $\pmb{0}=(0,0,0,\ldots)$;
\item $\pmb{1}=(1,1,1,\ldots)$;
\item $\pmb{1^{\ast}}=(1,0,0,\ldots)$;
\item Let $\lambda \in F^{\ast}$, then $I_{\lambda}=\{\langle\lambda\rangle_{s}/s\in \mathbb{N}\}$;
\item $I_{0}=\{\langle0\rangle_{s}/s\in \mathbb{N}\}$.
\end{itemize}
\section{The commutative semiring $(\mathbb{N},\vee, \wedge)$}
%
In this section we endow the set $\mathbb{N}$ of nonnegative integers with a binary operation $\wedge$ and we prove that $(\mathbb{N},\vee,\wedge)$ is a
commutative semiring, where $\vee$ is the the maximum of two integers.
\\We begin by recalling following definition which is from \cite{Gott}.
\begin{definition}\label{defini 1}
For two positive integers $i$ and $j$, let $i\wedge j$ be the maximum value of $r+s+1$ such that $\binom{r+s}{r}\neq0$ where
$r$ and $s$ are nonnegative integers with $0\leq r\leq i-1$ and $0\leq s\leq j-1$.
\end{definition}
It is straightforward that
\begin{equation*}
i\vee j\leq i\wedge j\leq i+j-1,
\end{equation*}
and that if $\text\rm{char}(F)=0$, then $i\wedge j=i+j-1$. More generally, we have
$$i\wedge j=i+j-1\quad\text{if and only if}\quad \binom{i+j-2}{i-1}\neq0.$$
Recall that in~\cite{Zier}, Zierler and Mills have defined a binary operation on the set $\mathbb{N^{\star}}$ of positive integers as follows:
\\Let $i$ and $j$ be any positive integers.
\\If $F$ is a field with characteristic $\text\rm{char}(F)=0$, then
 $$i\bigvee j=i+j-1 \,\,\text{for all}\,\, i,j\in \mathbb{N^{\star}}.$$
\\If $F$ is a field with characteristic $\text\rm{char}(F)=p>0$, we consider the $p$-adic digit expansions of $i-1$ and $j-1$
\begin{equation*}
i-1=\sum_{m\geq 0}i_{m}p^{m},
 \end{equation*}
 \begin{equation*}
j-1=\sum_{m\geq 0}j_{m}p^{m}.
\end{equation*}
Let $q(i,j)$ be the smallest nonnegative integer such that $i_{m}+j_{m}<p$ for all $m\geq q(i,j)$, then
\begin{align}\label{for 21}
i\bigvee j=p^{q(i,j)}+\sum_{m\geq q(i,j)}(i_{m}+j_{m})p^{m}.
\end{align}
Let $l\leq (i\bigvee j)-1$ be a nonnegative integer. It can be easily shown using Lucas' theorem that
there exist nonnegative integers $r\leq i-1$ and $s\leq j-1$ such that $l=r+s$ and $\binom{l}{r}\neq0$. Applying this result to
$l=(i\bigvee j)-1$ yields $(i\bigvee j)\leq i\wedge j$. Conversely, let $r\leq i-1$ and $s\leq j-1$ be two nonnegative integers such that $i\wedge j=r+s+1$ and
$\binom{r+s}{r}\neq0$. Let $r=\sum_{m\geq 0}r_{m}p^{m}$ and $s=\sum_{m\geq 0}s_{m}p^{m}$ be the $p$-adic digit expansions of $r$ and $s$, respectively.
Since $\binom{r+s}{r}\neq0$, it follows from Corollary~$4.72$ of~\cite{Ber} that $r_{m}+s_{m}<p$ for all $m\in \mathbb{N}$. Then
\begin{eqnarray*}
1+r+s&=&\sum_{m\geq q(i,j)}r_{m}p^{m}+\sum_{m\geq q(i,j)}s_{m}p^{m}+1+\sum_{m<q(i,j)}(r_{m}+s_{m})p^{m}\\
&\leq& \sum_{m\geq q(i,j)}i_{m}p^{m}+\sum_{m\geq q(i,j)}j_{m}p^{m}+p^{q(i,j)}\\
&=&i\bigvee j.
\end{eqnarray*}
This means that $i\wedge j\leq i\bigvee j$. Therefore, $i\wedge j= i\bigvee j$. This proved that the above definition given by Zierler and Mills
coincides with the definition~\ref{defini 1} recalled above and given in~\cite{Gott}, as mentioned by the authors of this last paper.
\\Later in their paper~\cite{Chin}, the authors adopted Zierler-Mills's defintion but in the following slightly different form
\begin{eqnarray*}
t(i,j)=\begin{cases}
i+j-1&\quad\text{if}\quad p=0\\
i+j-1&\quad\text{if}\quad p\neq0\,\,\text{and}\,\,q(i+1,j+1)=0\\
(i+1)\bigvee (j+1)-1& \quad\text{otherwise}
\end{cases}
\end{eqnarray*}
The following theorem is one of the keys of this paper.
\begin{thrm}\label{thm 221} Let $i,j\in \mathbb{N}$ and $\lambda, \mu\in F^{\ast}$. Put $0\wedge s=s\wedge 0=0$ for all $s\in \mathbb{N}$. Then
\begin{enumerate}[label=(\roman*)]
\item $i\wedge j$ is the unique integer such that $\langle1\rangle_{i}\langle1\rangle_{j}=\langle1\rangle_{i\wedge j}$.
\item $\langle\lambda\rangle_{i}\langle\mu\rangle_{j}=\langle\lambda\mu\rangle_{i\wedge j}$.
\end{enumerate}
\end{thrm}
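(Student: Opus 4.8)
The plan is to reduce parts (ii) and (iii) to part (i), so that essentially all the work is in (i). For (i) I would first fix the sequences $\Lambda_m=\big(\binom{n}{m}\big)_{n}$ and record that they are $F$-linearly independent: evaluating a finite combination $\sum_m c_m\Lambda_m$ at $n=0,1,2,\dots$ yields a lower-triangular system with $1$'s on the diagonal (since $\binom{n}{m}=0$ for $n<m$ and $\binom{m}{m}=1$), which is invertible over any field. Hence $\dim\langle1\rangle_s=s$, the map $s\mapsto\langle1\rangle_s$ is injective, and any $k$ with $\langle1\rangle_i\langle1\rangle_j=\langle1\rangle_k$ is automatically unique. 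The computational core is the expansion
\begin{equation*}
\Lambda_e\Lambda_t=\sum_{m=e\vee t}^{e+t}s_{e,t}^{m}\,\Lambda_m,
\end{equation*}
which is the Vandermonde-type product identity $\binom{x}{e}\binom{x}{t}=\sum_{m}\binom{m}{e\vee t}\binom{e\vee t}{m-\min(e,t)}\binom{x}{m}$ evaluated at $x=n$; in particular the top coefficient is $s_{e,t}^{e+t}=\binom{e+t}{e}$. This is exactly where the dependence on $F$ enters, since $\binom{e+t}{e}$ can vanish in positive characteristic.

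Writing $W:=\langle1\rangle_i\langle1\rangle_j=\Span\{\Lambda_e\Lambda_t : e\le i-1,\ t\le j-1\}$ and $N:=\max\Delta(i-1,j-1)$, I would establish $W=\langle1\rangle_{N+1}=\langle1\rangle_{i\wedge j}$ by two inclusions, after one preparatory lemma: $\Delta(i-1,j-1)$ is an initial segment $\{0,1,\dots,N\}$. This follows from Pascal's rule, for if $m\in\Delta$ is witnessed by $(e,t)$ with $e+t=m$ and $\binom{m}{e}\ne0$, then $\binom{m}{e}=\binom{m-1}{e-1}+\binom{m-1}{e}$ forces one summand to be nonzero, and the corresponding pair $(e-1,t)$ or $(e,t-1)$ witnesses $m-1\in\Delta$. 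For $W\subseteq\langle1\rangle_{N+1}$ I would show that every $m$ with $s_{e,t}^{m}\ne0$ for some admissible $e,t$ already lies in $\Delta(i-1,j-1)$: from $s_{e,t}^m\ne0$ one reads off $\binom{m}{e\vee t}\ne0$, and re-splitting $m$ as $e\vee t$ plus $m-(e\vee t)$, placed in the appropriate coordinates (so that admissibility $e'\le i-1$, $t'\le j-1$ holds, using $m-(e\vee t)\le\min(e,t)$), yields an admissible pair with leading coefficient $\binom{m}{e\vee t}\ne0$, whence $m\le N$. For the reverse inclusion, each $m\in\{0,\dots,N\}$, being in $\Delta$, is the top index of an admissible product $\Lambda_e\Lambda_t=\binom{m}{e}\Lambda_m+(\text{lower order})$ with $\binom{m}{e}\ne0$; the resulting family indexed by $m=0,\dots,N$ is triangular with respect to $\{\Lambda_m\}$ and therefore spans $\langle1\rangle_{N+1}$.

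I expect the main obstacle to be precisely this bookkeeping: matching the definition of $i\wedge j$, which inspects only the leading coefficients $\binom{e+t}{e}$, against the genuine span $W$, in which lower-order terms and vanishing leading coefficients could a priori interfere. The initial-segment lemma and the re-splitting trick are exactly what force the two descriptions to coincide, so these are the steps to get right; everything else is linear-algebra bookkeeping.

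Part (ii) would then follow by factoring out the geometric part: since $\pmb\lambda\pmb\mu=\pmb{\lambda\mu}$ pointwise, $(\pmb\lambda\Lambda_e)(\pmb\mu\Lambda_t)=\pmb{\lambda\mu}(\Lambda_e\Lambda_t)$, so $\langle\lambda\rangle_i\langle\mu\rangle_j=\pmb{\lambda\mu}\cdot(\langle1\rangle_i\langle1\rangle_j)$. As $\lambda\mu\ne0$, pointwise multiplication by $\pmb{\lambda\mu}$ is an $F$-linear bijection of $\mathcal{C}_F$ carrying $\langle1\rangle_s$ onto $\langle\lambda\mu\rangle_s$, and applying it to the identity from (i) gives $\langle\lambda\rangle_i\langle\mu\rangle_j=\langle\lambda\mu\rangle_{i\wedge j}$ (the cases $i=0$ or $j=0$ being absorbed by the convention $0\wedge s=0$). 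Finally, for (iii) I would transport the semiring structure of $(\widetilde{\mathcal{C}_F},+,.)$ along the bijection $s\mapsto\langle1\rangle_s$: by the stated relation $\langle1\rangle_s+\langle1\rangle_t=\langle1\rangle_{s\vee t}$ together with (i), the set $\{\langle1\rangle_s : s\in\mathbb{N}\}$ is closed under both operations, is a subsemiring, and has $\langle1\rangle_0=0$ and $\langle1\rangle_1=\Span\{\pmb{1_F}\}$ as its additive and multiplicative identities. Commutativity, associativity, and distributivity of $\wedge$ over $\vee$ are then inherited verbatim from the corresponding laws in $\widetilde{\mathcal{C}_F}$ via uniqueness of the index, so $(\mathbb{N},\vee,\wedge)$ is a commutative semiring with identity.
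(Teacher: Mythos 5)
Your proposal is correct and follows essentially the same route as the paper's proof: the same expansion $\Lambda_{e}\Lambda_{t}=\sum_{m}s_{e,t}^{m}\Lambda_{m}$, the same Pascal-rule argument showing $\Delta(i-1,j-1)$ is an initial segment, the same re-splitting trick with the pair $\left(m-(e\vee t),\, e\vee t\right)$ for the inclusion $\langle1\rangle_{i}\langle1\rangle_{j}\subseteq\langle1\rangle_{i\wedge j}$, a dimension count for uniqueness, and transport of structure along $s\mapsto\langle1\rangle_{s}$ for parts (ii) and (iii). The only cosmetic deviation is in the reverse inclusion, where you span $\langle1\rangle_{i\wedge j}$ by a triangular family of products all at once rather than the paper's successive induction adjoining $\Lambda_{j},\ldots,\Lambda_{i\wedge j-1}$; both rest on exactly the same nonvanishing top coefficients.
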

\begin{proof}~
\begin{enumerate}[]
\item It is clair that if $i=0$ or $j=0$, then
\begin{equation*}
\langle1\rangle_{i}\langle1\rangle_{j}=\langle1\rangle_{0}=\langle1\rangle_{i\wedge j}.
\end{equation*}
Assume that $1\leq i\leq j$. Since
\begin{equation*}
\langle1\rangle_{i}=\Span\{\Lambda_{0},\ldots,\Lambda_{i-1}\}
\end{equation*}
and
\begin{equation*}
\langle1\rangle_{j}=\Span\{\Lambda_{0},\ldots,\Lambda_{j-1}\},
\end{equation*}
we have
\begin{equation*}
\langle1\rangle_{i}\langle1\rangle_{j}=\Span\{\Lambda_{e}\Lambda_{t}/0\leq e\leq i-1; 0\leq t\leq j-1\}.
\end{equation*}
Obviously $\Lambda_{0}\Lambda_{t}=\Lambda_{t}$ for all $0\leq e\leq i-1$ and all $0\leq t\leq j-1$, so
\begin{equation*}
\Lambda_{0},\ldots,\Lambda_{j-1}\in \langle1\rangle_{i}\langle1\rangle_{j},
 \end{equation*}
i.e.,
\begin{equation*}
\langle1\rangle_{j}\subseteq \langle1\rangle_{i}\langle1\rangle_{j}.
\end{equation*}
On the other hand, let $m$ be an integer such that $1\leq m\leq i\wedge j-1$. Then there exist $e_{m}\leq i-1$ and $t_{m}\leq j-1$ such that
$e_{m}+t_{m}=m$ and $\binom{m}{e_{m}}\neq 0$.
\\Now consider the following identities
$$\Lambda_{e_{m}}\Lambda_{t_{m}}=\sum_{q=e_{m}}^{m}\binom{q}{e_{m}}\binom{e_{m}}{q-t_{m}}\Lambda_{q},\,\, j\leq m\leq i\wedge j-1.$$
From the first identity and the fact that $\Lambda_{0}\ldots \Lambda_{j-1}\in \langle1\rangle_{i}\langle1\rangle_{j}$, we get that
$\Lambda_{j}\in \langle1\rangle_{i}\langle1\rangle_{j}$. Using this result and that
$\Lambda_{0}\ldots \Lambda_{j}\in \langle1\rangle_{i}\langle1\rangle_{j}$, from the second identity it follows that
$\Lambda_{j+1}\in \langle1\rangle_{i}\langle1\rangle_{j}$. We proceed in this way, successively, until we get that
$\Lambda_{i\wedge j-1}\in \langle1\rangle_{i}\langle1\rangle_{j}$. Consequently
$\langle1\rangle_{i\wedge j}\subseteq \langle1\rangle_{i}\langle1\rangle_{j}$.
\\ For the reverse inclusion, let $r\leq i-1$ and $s\leq j-1$ be two nonnegative integers. Then
$$\Lambda_{r}\Lambda_{s}=\sum_{q=r}^{r+s}\binom{q}{r}\binom{r}{q-s}\Lambda_{q}.$$
\\Consequently, in view of Definition~\ref{defini 1}, $\Lambda_{e}\Lambda_{t}\in \langle1\rangle_{i\wedge j}$ and hence
    $\langle1\rangle_{i}\langle1\rangle_{j}\subseteq \langle1\rangle_{i\wedge j}$. The uniqueness of $i\wedge j$
    follows immediately from the fact that $\dim(\langle1\rangle_{m})=m$ for all $m\in \mathbb{N}$.
\item Since $\langle\lambda\rangle_{i}=\langle\lambda\rangle_{1}\langle1\rangle_{i}$, it follows that
\begin{eqnarray*}
\langle\lambda\rangle_{i}\langle\mu\rangle_{j}&=&\langle\lambda\rangle_{1}\langle\mu\rangle_{1}\langle1\rangle_{i}\langle1\rangle_{j}\\
&=&\langle\lambda\mu\rangle_{1}\langle1\rangle_{i\wedge j}.
\end{eqnarray*} That is,
\begin{equation*}
\langle\lambda\rangle_{i}\langle\mu\rangle_{j}=\langle\lambda\mu\rangle_{i\wedge j}.
\end{equation*}
\end{enumerate}
\end{proof}
Now we can prove the following theorem.
\begin{thrm}\label{thm 1074}
$(\mathbb{N},\vee,\wedge)$ is a commutative semiring, where $\vee$ is the the maximum of two integers.
\end{thrm}
\begin{proof} Obviously $(\mathbb{N},\vee)$ is a commutative  monoid with neutral element $0$.
Let $i,j$ be any nonnegative integers. We have
\begin{equation*}
\langle1\rangle_{i}\langle1\rangle_{j}=\langle1\rangle_{j}\langle1\rangle_{i}
\end{equation*}
 and
 \begin{equation*}
 (\langle1\rangle_{i}\langle1\rangle_{j})\langle1\rangle_{s}=\langle1\rangle_{i}(\langle1\rangle_{j}\langle1\rangle_{s}),
 \end{equation*}
  then $i\wedge j=j\wedge i$ and $(i\wedge j)\wedge s=i\wedge(j\wedge s)$, that is, the operation $\wedge$ is commutative and associative. We also have
  \begin{equation*}
  \langle1\rangle_{i}\langle1\rangle_{1}=\langle1\rangle_{i}
  \end{equation*}
   then $i\wedge 1=i$ and $(\mathbb{N},\wedge)$ is a monoid with identity element $1$. It is obvious that $0$ is an absorbing element of $\wedge$. Finally,
   we have
   \begin{equation*}
   (\langle1\rangle_{i}+\langle1\rangle_{j})\langle1\rangle_{m}=\langle1\rangle_{i}\langle1\rangle_{m}+\langle1\rangle_{j}\langle1\rangle_{m},
    \end{equation*}
    which gives
    \begin{equation*}
    (i\vee j)\wedge m=(i\wedge m)\vee(j\wedge m);
    \end{equation*}
     thus $\wedge$ is distributive with respect to $\vee$. This completes the proof.
\end{proof}
\section{Some properties of the operation $\wedge$}
The operation $\wedge$ depends only on the characteristic of $F$ and has the following properties.
\begin{lem}\label{lem 197} Let $i,j$ be positive integers and $s,s',t,t'$ nonnegative integers. Let $p=\text\rm{char}(F)$. Then
\begin{enumerate}[label=(\arabic*)]
\item If $s'\leq s$ and $t'\leq t$, then $s'\wedge t'\leq s\wedge t$.
\item $i\wedge j$ is the unique integer such that $i\wedge j\leq i+j-1$, $\binom{(i\wedge j)-1}{i-1}\neq 0$ and $\binom{i\wedge j}{e}=0$ for all
integer $e$ (if it exists) such that
\begin{equation*}
(i\wedge j)-j+1\leq e\leq i-1
\end{equation*}
\end{enumerate}
\end{lem}
\begin{proof}~
\begin{enumerate}[label=(\arabic*)]
\item Follows immediately from the fact that $\langle1\rangle_{s'}\langle1\rangle_{t'}$ is a subspace of $\langle1\rangle_{s}\langle1\rangle_{t}$.
\item Since $i\wedge j=(i\wedge j)-e+e$ with $e\leq i-1$ and $(i\wedge j)-e\leq j-1$, it follows that $\binom{i\wedge j}{e}=0$ for all
integer $e$ such that
\begin{equation*}
(i\wedge j)-j+1\leq e\leq i-1.
\end{equation*}
To see that $\binom{(i\wedge j)-1}{i-1}\neq 0$ let $s$ and $t$ be two nonnegative integers such that $s\leq i-1$, $t\leq j-1$, $(i\wedge j)-1=s+t$ and
$\binom{s+t}{s}\neq0$.
\\If $s=i-1$, then there is nothing to prove. If $s< i-1$, then we have $$0=\binom{i\wedge j}{s+1}=\binom{(i\wedge j)-1}{s+1}+\binom{(i\wedge j)-1}{s},$$
and hence $\binom{(i\wedge j)-1}{s+1}\neq0$. Repeating this until we get $\binom{(i\wedge j)-1}{i-1}\neq0$.
\end{enumerate}
\end{proof}
\begin{remark}
For another proof of Part $(1)$, see Lemma~$2.1$ of~\cite{Chin}.
\end{remark}
We find the following result due to G\"{o}ttfert and Niederreiter~\cite{Nied}.
\begin{cor}\label{corrr}
Let $i$ and $j$ be any positive integers. Then
$i\wedge j=i+m-1$, where $m$ is the largest integer with $1\leq m\leq j$ such that $\binom{i+m-2}{i-1}\neq 0$.
\end{cor}
\begin{proof}
Let $m=(i\wedge j)-i+1$, then $i+m-2=(i\wedge j)-1$. Hence from Part $(2)$ of Lemma~\ref{lem 197} we have $\binom{i+m-2}{i-1}\neq 0$. Suppose that there
exists a positive integer $n$ such that $1\leq m+n\leq j$. Then, $(i\wedge j)-j+1\leq i-n\leq i-1$. Hence, in view of Part $(2)$ of Lemma~\ref{lem 197},
$$\binom{i\wedge j}{i-n}=\binom{i\wedge j}{i-n+1}=\cdots=\binom{i\wedge j}{i-1}=0.$$
Now using the Pascal formula
$$\binom{e}{t}=\binom{e-1}{t}+\binom{e-1}{t-1},$$
we find easily that
$$\binom{i\wedge j}{i-1}=\binom{(i\wedge j)+1}{i-1}=\cdots=\binom{(i\wedge j)+n-1}{i-1}=0.$$ The proof concludes by observing that
$i+(m+n)-2=(i\wedge j)+n-1$.
\end{proof}
The following corollary is a direct consequence of Lemma~\ref{lem 197} and Corollary~\ref{corrr}.
 \begin{cor}\label{cor1r}
Suppose $d=(i+1)\wedge(j+1)-1< n\leq i+j$, then
\begin{enumerate}[label=(\roman*)]
\item  $\binom{d}{i}\neq0$,
\item $\binom{n}{i}=0$.
\end{enumerate}
\end{cor}
\begin{proof} In view of Part $(2)$ of Lemma~\ref{lem 197}, $\binom{d}{i}=\binom{(i+1)\wedge(j+1)-1}{(i+1)-1}\neq0$ and the statement $(i)$ holds. Consider
now $n$ such that $$(i+1)\wedge(j+1)-1< n\leq i+j$$
then
$$(i+1)\wedge(j+1)-(i+1)+1< m=n+1-i\leq j+1,$$
which implies by Corollary~\ref{corrr}, that
$$\binom{(i+1)+m-2}{(i+1)-1}=0,$$
i.e.,
$$\binom{n}{i}=0.$$ Which completes the proof of Part $(ii)$.
\end{proof}
\section{The graded commutative semirings $(\mathcal{L}_{F},+,.)$ and $(\mathcal{L}_{F},+,\ast)$}
It is clair that $\langle\lambda\rangle_{s}+\langle\lambda\rangle_{t}=\langle\lambda\rangle_{s\vee t}$ for all $\lambda\in F$, $\vee$ is the the
maximum of two integers. Hence $I_{\lambda}=\{\langle\lambda\rangle_{s}/s\in \mathbb{N}\}$ is a submonoid of $(\widetilde{\mathcal{C}_{F}},+)$ and so is $\sum_{\lambda \in F^{\ast}}I_{\lambda}$. In view
of Theorem~2.1 of~\cite{Mouc},
$$\sum_{\lambda \in F^{\ast}}I_{\lambda}$$ is the internal direct sum of the monoids $I_{\lambda}, \lambda\in F^{\ast}$, and
$$I_{0}+\sum_{\lambda \in F^{\ast}}I_{\lambda}$$ is the internal direct sum of the monoids $I_{\lambda}, \lambda\in F$.
\\We shall now try to prove that $\mathcal{L}_{F}=I_{0}+\sum_{\lambda \in F^{\ast}}I_{\lambda}$ and that $(\mathcal{L}_{F},+,.)$ and $(\mathcal{L}_{F},+,\ast)$ are subsemirings of
$(\widetilde{\mathcal{C}_{F}},+,.)$ which are graded by the multiplicative monoid $F$.
\\
In the following lemma we introduce a binary operation on $\mathbb{N}$ and establish some of its properties that are needed in this section and the next.
\begin{lem}\label{lemzm} Let $\lambda\in F$ and let $\wedge_{\lambda}$ be the noncommutative binary operation on $\mathbb{N}$ defined by
\begin{equation*}
t\wedge_{\lambda} s=\min(t, s)\delta_{0,\lambda}+t\delta_{0,s}^{c}\delta_{0,\lambda}^{c}
\end{equation*}
where $\delta_{e,f}^{c}=1-\delta_{e,f}$ and $\delta_{e,f}$ is the Kronecker symbol. Then for all $s,t,e,f\in \mathbb{N}$ and all $\lambda\in F$ we have
\begin{enumerate}[label=(\roman*),resume]
\item If $e\leq f$ and $s\leq t$, then $e\wedge_{\lambda}s\leq f\wedge_{\lambda} t$.
\item $\langle0\rangle_{t}\langle\lambda\rangle_{s}=\langle0\rangle_{t\wedge_{\lambda} s}.$
\end{enumerate}
\end{lem}
\begin{proof} Part $(i)$ is obvious. Next we shall show Part $(ii)$. Since $\pmb{0}_{i}\Lambda_{j}=\binom{i}{j}\pmb{0}_{i}$ We have
$\langle0\rangle_{t}\langle1\rangle_{s}=\langle0\rangle_{t}$ for all $t\in \mathbb{N}$ and $s\in \mathbb{N}^{\ast}$. Then, for all $\lambda \in F^{\ast}$
and for all $(t,s)\in \mathbb{N}\times \mathbb{N}^{\ast}$,
\begin{eqnarray*}
\langle0\rangle_{t}\langle\lambda\rangle_{s}&=&\langle0\rangle_{t}\langle\lambda\rangle_{1}\langle1\rangle_{s}\\
&=&\langle\lambda\rangle_{1}\langle0\rangle_{t}\\
&=&\Span\{\pmb{0}_{0},\lambda\pmb{0}_{1},\ldots, \lambda^{t}\pmb{0}_{t}\}\\
&=&\langle0\rangle_{t}\quad (\text{since}\, \lambda\, \text{is invertible}).
\end{eqnarray*}
Hence, for all $\lambda \in F^{\ast}$ and all $t,s\in \mathbb{N}$, we have
\begin{equation*}
\langle0\rangle_{t}\langle\lambda\rangle_{s}=\langle0\rangle_{t\delta_{0,s}^{c}}
\end{equation*}
But since
\begin{equation*}
\langle0\rangle_{t}\langle0\rangle_{s}=\langle0\rangle_{\min(t, s)}
\end{equation*}
it follows that
\begin{equation*}
\langle0\rangle_{t}\langle\lambda\rangle_{s}=\langle0\rangle_{t\wedge_{\lambda} s}
\end{equation*}
for all $\lambda \in F$ and all $t,s\in \mathbb{N}$.
\end{proof}
\begin{prop} Let $F$ be a field. Then $\mathcal{L}_{F}^{\ast}=\sum_{\lambda \in F^{\ast}}I_{\lambda}$ the set of all $L(P)$ where $P$ is a monic polynomial
with all roots in $F^{\ast}$.
\end{prop}
\begin{proof} Follows easily from the fact that if $P\in F[X]$ a monic polynomial with all roots in $F^{\ast}$, then
$$P=\prod_{\lambda\in F^{\ast}}(X-\lambda)^{n_{\lambda}}\quad\text{if and only if}\quad L(P)=\sum_{\lambda\in F^{\ast}}\langle\lambda\rangle_{n_{\lambda}}.$$
\end{proof}
Let us first recall the following definition
\begin{definition} let $(G,\star)$ be a monoid. A semiring $R$ is said to be $G$-graded (or graded by $G$) if there exists a family
$\{R_{\alpha}\}_{\alpha \in G}$ of additive subsemigroups of $R$ satisfying the following conditions:
\begin{enumerate}[label=(\arabic*)]
\item $R=\sum_{\alpha \in G}R_{\alpha}$;
\item $R_{\alpha}R_{\beta}\subseteq R_{\alpha\star\beta}$ for each $\alpha,\beta\in G$;
\item If $a_{\alpha_{1}},\ldots,a_{\alpha_{m}}$ are nonzero elements in $R$ where $\alpha_{i}\neq \alpha_{j}$ if $i\neq j$, and each
$a_{\alpha_{i}}\in R_{\alpha_{i}}$, then $\sum_{i=1}^{m}a_{\alpha_{i}}\neq 0$.
\end{enumerate}
\end{definition}
Let us consider $(\mathcal{L}_{F}^{\ast},+,.)$ where multiplication is given by
$$\langle\lambda\rangle_{i}\langle\mu\rangle_{j}=\langle\lambda\mu\rangle_{i\wedge j},$$
extended via distributivity. Then we have the following theorem.
\begin{thrm}\label{thmm 10} Let $F$ be a field. Then
\begin{enumerate}[label=(\roman*),resume]
\item $(\mathcal{L}_{F}^{\ast},+,.)$ is a commutative semiring graded by the multiplicative monoid $(F^{\ast},.)$.
\item $(\mathcal{L}_{F},+,.)$ is a commutative semiring graded by the multiplicative monoid $(F,.)$.
\end{enumerate}
\end{thrm}
\begin{proof}
\begin{enumerate}[label=(\roman*)]
\item The fact that $(\mathcal{L}_{F}^{\ast},+,.)$ is a commutative semiring follows from Theorem~\ref{thm 221}.
In this semiring, $\langle1\rangle_{1}=\Span\{\pmb{1}\}$ is the multiplicative identity and the neutral element
$\langle1\rangle_{0}=\Span\{\pmb{0}\}$ is an absorbing element. On the other hand, we have
    \begin{equation*}
    \mathcal{L}_{F}^{\ast}=\Sigma_{\lambda\in F^{\ast}}I_{\lambda}
    \end{equation*}
     where $\{I_{\lambda}\}_{\lambda\in F^{\ast}}$ is a family of additive subsemigroups of $\mathcal{L}_{F}^{\ast}$ satisfying
     $I_{\lambda}I_{\mu}\subseteq I_{\lambda\mu}$ by Theorem~\ref{thm 221}. Let now
     $\langle\lambda_{1}\rangle_{t_{1}},\ldots,\langle\lambda_{m}\rangle_{t_{m}}$ be nonzero elements of $\mathcal{L}_{F}^{\ast}$. Since
     \begin{equation*}
     \langle\lambda_{i}\rangle_{t_{i}}\subseteq \langle\lambda_{1}\rangle_{t_{1}}+\cdots+\langle\lambda_{m}\rangle_{t_{m}},
     \end{equation*}
      it follows that
       \begin{equation*}
      \langle\lambda_{1}\rangle_{t_{1}}+\cdots+\langle\lambda_{m}\rangle_{t_{m}}\neq \Span\{\pmb{0}\}.
      \end{equation*}
       This completes the proof of Part $(i)$.
\item Follows immediately from $(i)$ and the fact that $$\langle0\rangle_{s}\langle\lambda\rangle_{t}=\langle0\rangle_{s\vee_{\lambda}t}.$$
\end{enumerate}
\end{proof}
Now we are going to show that $(\mathcal{L}_{F},+,\ast)$ is also a commutative semiring graded by the multiplicative monoid $(F,.)$.
\begin{lem}\label{lem01m} Let $s$ and $t$ be any positive integers and $m$ be any nonnegative integer. Then for all nonzero elements $\alpha, \lambda$ of $F$, we have
\begin{enumerate}[label=(\arabic*)]
\item $\langle 0\rangle_{0}\ast\langle 0\rangle_{m}=\langle 0\rangle_{0}=\langle 0\rangle_{0\wedge m}$.
\item $\langle 0\rangle_{s}\ast\langle 0\rangle_{t}=\langle 0\rangle_{s\wedge t}$.
\item $\langle 0\rangle_{s}\ast\langle \lambda\rangle_{1}=\langle \lambda\rangle_{s}=\langle \lambda\rangle_{s\wedge 1}$.
\item $\langle \alpha\rangle_{1}\ast\langle \lambda\rangle_{1}=\langle \alpha+\lambda\rangle_{1}$.
\item $\langle \alpha\rangle_{s}\ast\langle \lambda\rangle_{t}=\langle \alpha+\lambda\rangle_{s\wedge t}$.
\end{enumerate}
In other words, we have for all $e,f\in \mathbb{N}$ and for all $\gamma,\beta\in F$:
$$\langle \gamma\rangle_{e}\ast\langle \beta\rangle_{f}=\langle \gamma+\beta\rangle_{e\wedge f}.$$
\end{lem}
\begin{proof}
\begin{enumerate}[label=(\arabic*)]
\item Trivial.
\item We have $\pmb{0}_{e}\ast \pmb{0}_{f}=\pmb{\alpha}$ where
$$\alpha_{n}=\sum_{i=0}^{n}\binom{n}{i}\pmb{0}_{e}(i)\pmb{0}_{f}(n-i)=\binom{e+f}{e}\pmb{0}_{e+f}(n);$$
in other words, $\pmb{0}_{e}\ast \pmb{0}_{f}=\binom{e+f}{e}\pmb{0}_{e+f}$. This shows that
$$\langle 0\rangle_{s}\ast\langle 0\rangle_{t}=\Span\{\binom{e+f}{e}\pmb{0}_{e+f}/(e,f)\in\nabla(s-1,t-1)\}.$$
Hence we deduce that the inclusion $\langle 0\rangle_{s}\ast\langle 0\rangle_{t}\subseteq \langle 0\rangle_{s\wedge t}$ holds.
\\The other inclusion is an immediate consequence of Lemma~\ref{lem 0110}.
\item We have $\pmb{0}_{e}\ast \pmb{\lambda}=\pmb{\alpha}$ where
$$\alpha_{n}=\sum_{i=0}^{n}\binom{n}{i}\pmb{0}_{e}(i)\lambda^{n-i}=\binom{n}{e}\lambda^{n-e}.$$ This means that
$\pmb{0}_{e}\ast \pmb{\lambda}=\lambda^{-e}\pmb{\lambda}\Lambda_{e}$, and as consequence
$\langle 0\rangle_{s}\ast\langle \lambda\rangle_{1}=\langle \lambda\rangle_{s}$.
\item Follows from the fact that $\pmb{\alpha}\ast \pmb{\lambda}=\pmb{\beta}$ where
$$\beta_{n}=\sum_{i=0}^{n}\binom{n}{i}\alpha^{i}\beta^{n-i}=(\alpha+\beta)^{n}.$$
\item We have
\begin{eqnarray*}
\langle \alpha\rangle_{s}\ast\langle \lambda\rangle_{t}&=
(\langle 0\rangle_{s}\ast\langle \alpha\rangle_{1})\ast(\langle 0\rangle_{t}\ast\langle \lambda\rangle_{1})\\
&=\langle 0\rangle_{s}\ast\langle 0\rangle_{t}\ast \langle \alpha\rangle_{1}\ast\langle \lambda\rangle_{1}\\
&=\langle 0\rangle_{s\wedge t}\ast\langle \alpha+\lambda\rangle_{1}\\
&=\langle \alpha+\lambda\rangle_{s\wedge t}.
\end{eqnarray*}
\end{enumerate}
\end{proof}
\begin{thrm}\label{thmm 10} Let $F$ be a field. Then
$(\mathcal{L}_{F},+,\ast)$ is a commutative semiring graded by the multiplicative monoid $(F,.)$.
\end{thrm}
\begin{proof}
Follows from Lemma~\ref{lem01m}.
\\We note that in this semiring, $\langle0\rangle_{1}=\Span\{\pmb{1^{\ast}}\}$ is the multiplicative identity
and the neutral element $\langle1\rangle_{0}=\Span\{\pmb{0}\}$ is an absorbing element.
\end{proof}
\section{Applications}
Let us partition
\begin{equation*}
{F^{\star}}^{m}=\bigcup_{i\in T_{1}} \Omega_{i}
\end{equation*}
and
\begin{equation*}
F^{m}=\bigcup_{i\in T_{2}} \Phi_{i}
\end{equation*}
into equivalence classes under the equivalence relations $\mathcal{R}_{1}$ and $\mathcal{R}_{2}$ respectively, where
$$(\lambda_{1},\ldots,\lambda_{m})\mathcal{R}_{1}(\mu_{1},\ldots,\mu_{m}) \quad\text{if and only if}\quad \lambda_{1}\cdots\lambda_{m}=\mu_{1}\cdots\mu_{m}.$$
$$(\lambda_{1},\ldots,\lambda_{m})\mathcal{R}_{2}(\mu_{1},\ldots,\mu_{m}) \quad\text{if and only if}\quad \lambda_{1}+\cdots+\lambda_{m}=\mu_{1}+\cdots+\mu_{m}.$$
Let $\wedge_{m}$ denote the map defined as follows
\begin{align*}
\wedge_{m}: \mathbb{N}^{m}&\longrightarrow \mathbb{N} \\
(t_{1},\ldots,t_{m})&\longmapsto t_{1}\wedge\cdots\wedge t_{m}.
\end{align*}
Let $\xi_{m}: F^{m}\longrightarrow \mathbb{N}^{m}$ be a mapping satisfying Condition $(\mathcal{D})$ below.
\begin{eqnarray*}
(\mathcal{D}):
&\text{For all}\,\,\sigma\in S_{m},\,\,\text{the symmetric group of degree}\,\,m,\\
&\xi_{m}(\mu_{\sigma(1)},\ldots,\mu_{\sigma(m)})=(t_{\sigma(1)},\ldots,t_{\sigma(m)})\\
&\text{whenever}\,\,(t_{1},\ldots,t_{m})=\xi_{m}(\mu_{1},\ldots,\mu_{m}).
\end{eqnarray*}
Suppose further that $\xi_{m}$ are with finite supports, and let us use the following notations for simplicity
\renewcommand{\arraystretch}{1.5}
$$\begin{array}{llll}
\overline{\xi_{m}}&=&\wedge_{m}\circ \xi_{m}&\\
\Omega_{i}^{\xi_{m}}&=&\max_{(\mu_{1},\ldots,\mu_{m})\in \Omega_{i}}(\overline{\xi_{m}}(\mu_{1},\ldots,\mu_{m}))&\\
\Phi_{i}^{\xi_{m}}&=&\max_{(\mu_{1},\ldots,\mu_{m})\in \Phi_{i}}(\overline{\xi_{m}}(\mu_{1},\ldots,\mu_{m}))&\\
\widehat{\Omega}_{i}&=&\mu_{1}\cdots\mu_{m}&\hspace*{-5.5pc}\text{if}\quad (\mu_{1},\ldots,\mu_{m})\in \Omega_{i}\\
\widehat{\Phi}_{i}&=&\mu_{1}+\cdots+\mu_{m}&\hspace*{-5.5pc}\text{if}\quad (\mu_{1},\ldots,\mu_{m})\in \Phi_{i}\\
\langle\mu_{1}\rangle_{\xi_{m}}\cdots\langle\mu_{m}\rangle_{\xi_{m}}&=&\langle\mu_{1}\cdots\mu_{m}\rangle_{\overline{\xi_{m}}(\mu_{1},\ldots,\mu_{m})}&
\hspace*{-5.5pc}\text{if}\quad (\mu_{1},\ldots,\mu_{m})\in {F^{\star}}^{m}\\
\langle\mu_{1}\rangle_{\xi_{m}}\odot\cdots\odot\langle\mu_{m}\rangle_{\xi_{m}}&=&
\langle\mu_{1}+\cdots+\mu_{m}\rangle_{\overline{\xi_{m}}(\mu_{1},\ldots,\mu_{m})}&
\end{array}$$
From Theorem~\ref{thm 221} and Lemma~\ref{lem01m} we have
$$\langle\mu_{1}\rangle_{\xi_{m}}\cdots\langle\mu_{m}\rangle_{\xi_{m}}=
\langle\mu_{1}\rangle_{t_{1}}\cdots\langle\mu_{m}\rangle_{t_{m}}\quad \forall(\mu_{1},\ldots,\mu_{m})\in {F^{\star}}^{m}$$
and
$$\langle\mu_{1}\rangle_{\xi_{m}}\odot\cdots\odot\langle\mu_{m}\rangle_{\xi_{m}}=
\langle\mu_{1}\rangle_{t_{1}}\ast\cdots\ast\langle\mu_{m}\rangle_{t_{m}}\quad \forall(\mu_{1},\ldots,\mu_{m})\in F^{m}$$
 whenever $\xi_{m}(\mu_{1},\ldots,\mu_{m})=(t_{1},\ldots,t_{m})$.
\\Under these notations, we have the following lemma:
\begin{lem}\label{lem 245} Let $m$ be a positive integer, $\xi_{m}: F^{m}\longrightarrow \mathbb{N}^{m}$
be a mappping with finite support satisfying Condition~$(\mathcal{D})$. Then we have
\begin{enumerate}[label=(\roman*)]
\item $\sum_{(\lambda_{1},\ldots,\lambda_{m})\in {F^{\star}}^{m}}\langle\lambda_{1}\rangle_{\xi_{m}}\cdots\langle\lambda_{m}\rangle_{\xi_{m}}=
\sum_{i\in T_{1}}\langle\widehat{\Omega}_{i}\rangle_{\Omega_{i}^{\xi_{m}}}.$
\item $\sum_{(\lambda_{1},\ldots,\lambda_{m})\in F^{m}}\langle\lambda_{1}\rangle_{\xi_{m}}\odot\cdots\odot\langle\lambda_{m}\rangle_{\xi_{m}}=
\sum_{i\in T_{2}}\langle\widehat{\Phi}_{i}\rangle_{\Phi_{i}^{\xi_{m}}}.$
\end{enumerate}
\end{lem}
\begin{proof}
\begin{enumerate}[label=(\roman*)]
\item
We have \begin{align*}
\sum_{(\lambda_{1},\ldots,\lambda_{m})\in {F^{\star}}^{m}}\langle\lambda_{1}\rangle_{\xi_{m}}\cdots\langle\lambda_{m}\rangle_{\xi_{m}}&=
\sum_{i\in T_{1}}\sum_{(\lambda_{1},\ldots,\lambda_{m})\in \Omega_{i}}\langle\lambda_{1}\rangle_{\xi_{m}}\cdots\langle\lambda_{m}\rangle_{\xi_{m}}\\
&=\sum_{i\in T_{1}}\sum_{(\lambda_{1},\ldots,\lambda_{m})\in \Omega_{i}}
\langle\lambda_{1}\cdots\lambda_{m}\rangle_{\overline{\xi_{m}}(\lambda_{1},\ldots,\lambda_{m})}\\
&=\sum_{i\in T_{1}}\sum_{(\lambda_{1},\ldots,\lambda_{m})\in \Omega_{i}}
\langle\widehat{\Omega}_{i}\rangle_{\overline{\xi_{m}}(\lambda_{1},\ldots,\lambda_{m})}\\
&=\sum_{i\in T_{1}}\langle\widehat{\Omega}_{i}\rangle_{\Omega_{i}^{\xi_{m}}}
\end{align*}
\item The proof of this is analogous to that of $(i)$.
\end{enumerate}
\end{proof}
\begin{remark}
In Lemma~\ref{lem 245}, we may replace the condition that $\xi_{m}$ (resp. $\xi_{m}$) is with finite support
with the more general assumption that $\overline{\xi_{m}}$ (resp. $\overline{\xi_{m}}$) is bounded on every $\Omega_{i}$ (resp. $\Phi_{i}$).
\end{remark}
\begin{remark}
Note that Condition~$(\mathcal{D})$ is a sufficient condition to guarantee that for all $\sigma\in S_{m}$,
$$\langle\mu_{\sigma(1)}\rangle_{\xi_{m}}\cdots\langle\mu_{\sigma(m)}\rangle_{\xi_{m}}=
\langle\mu_{1}\rangle_{\xi_{m}}\cdots\langle\mu_{m}\rangle_{\xi_{m}}\quad \forall(\mu_{1},\ldots,\mu_{m})\in {F^{\star}}^{m}$$
and
$$\langle\mu_{\sigma(1)}\rangle_{\xi_{m}}\odot\cdots\odot\langle\mu_{\sigma(m)}\rangle_{\xi_{m}}=
\langle\mu_{1}\rangle_{\xi_{m}}\odot\cdots\odot\langle\mu_{m}\rangle_{\xi_{m}}.$$
\end{remark}
We may now state the following result.
\begin{thrm}\label{thmrmrm} Let $F$ be a field and $\overline{F}$ be an algebraic closure of $F$. Let
$P_{i}=\prod_{\lambda\in \overline{F}^{\ast}}(X-\lambda)^{\lambda(P_{i})}$, $1\leq i\leq m$, be monic
polynomials with all roots in $\overline{F}^{\ast}$, where
    $\lambda(P_{i})$ designates the multiplicity of $\lambda$ in $P_{i}$.
    Let $\xi_{m}$ be the map defined by
\begin{align*}
\xi_{m}: \overline{F}^{m}&\longrightarrow \mathbb{N}^{m}\\
(\lambda_{1},\ldots,\lambda_{m})&\longmapsto (\lambda_{1}(P_{1}),\ldots,\lambda_{m}(P_{m})).
\end{align*} Then
\begin{enumerate}[label=(\roman*)]
\item $\prod_{i=1}^{m}L_{\overline{F}}(P_{i})=L_{\overline{F}}(\Upsilon(P_{1},\ldots,P_{m})),$
where
\begin{equation*}
\Upsilon(P_{1},\ldots,P_{m})=\prod_{i\in T_{1}}(X-\widehat{\Omega}_{i})^{\Omega_{i}^{\xi_{m}}}
\end{equation*}
\item $\Upsilon(P_{1},\ldots,P_{m})=\prod_{i\in T_{1}}(X-\widehat{\Omega}_{i})^{\Omega_{i}^{\xi_{m}}}$ is a polynomial over $F$ and
$$\prod_{i=1}^{m}L_{F}(P_{i})=L_{F}(\Upsilon(P_{1},\ldots,P_{m})).$$
\end{enumerate}
\end{thrm}
\begin{proof}
\begin{enumerate}[label=(\roman*)]
\item
Let
\begin{equation*}
\Upsilon(P_{1},\ldots,P_{m})=\prod_{i\in T_{1}}(X-\widehat{\Omega}_{i})^{\Omega_{i}^{\xi_{m}}}.
 \end{equation*}
 Since the $\widehat{\Omega}_{i}$'s are pairwise distinct elements of $\overline{F}^{\ast}$, it follows that
 \begin{equation*}
 L_{\overline{F}}(\Upsilon(P_{1},\ldots,P_{m}))=\sum_{i\in T_{1}}\langle\widehat{\Omega}_{i}\rangle_{\Omega_{i}^{\xi_{m}}}.
 \end{equation*}
  On the other hand, by Lemma~\ref{lem 245}, we have \begin{align*}
\prod_{i=1}^{m}L_{\overline{F}}(P_{i})&=\prod_{i=1}^{m}(\sum_{\lambda\in \overline{F}^{\ast}}\langle\lambda\rangle_{\lambda(P_{i})})\\
&=\sum_{(\lambda_{1},\ldots,\lambda_{m})\in {\overline{F}^{\star}}^{m}}\langle\lambda_{1}\rangle_{\xi_{m}}\cdots\langle\lambda_{m}\rangle_{\xi_{m}}\\
&=\sum_{i\in T_{1}}\langle\widehat{\Omega}_{i}\rangle_{\Omega_{i}^{\xi_{m}}}.
\end{align*}
Accordingly,
\begin{equation*}
\prod_{i=1}^{m}L_{\overline{F}}(P_{i})=L_{\overline{F}}(\Upsilon(P_{1},\ldots,P_{m})).
\end{equation*}
\item Follows immediately from Lemma~2.0 of~\cite{Chin} and the easily shown fact that
\begin{equation*}
L_{F}(P_{1})\cdots L_{F}(P_{m})=\mathcal{C}_{F}\bigcap L_{\overline{F}}(P_{1})\cdots L_{\overline{F}}(P_{m}).
\end{equation*}
\end{enumerate}
\end{proof}
\begin{remark} We note that if one of the $P_{i}$ is $1$, then $\Omega_{i}^{\xi_{m}}=0$ for all $i\in T_{1}$; that is $\Upsilon(P_{1},\ldots,P_{m})=1$.
\end{remark}
Now let $P_{i}\in F[X]$, $1\leq i\leq m$, be nonconstant monic polynomials. Then
\begin{equation*}
L_{\overline{F}}(P_{i})=\langle0\rangle_{s_{i}}+L_{\overline{F}}(Q_{i}),\,\, 1\leq i\leq m,
 \end{equation*}
 where $s_{i}=0(P_{i})$ is the multiplicity of $0$ as a root of $P_{i}$ and $0(Q_{i})=0, 1\leq i\leq m.$
\\Since
 $$
 \langle0\rangle_{t}L_{\overline{F}}(Q)=
 \langle0\rangle_{t}\sum_{\lambda\in \overline{F}^{\ast}}\langle\lambda\rangle_{\lambda(Q)}$$
 for all polynomial $Q\neq1$ such that $Q(0)\neq0$, it follows from Lemma~\ref{lemzm} that
   \begin{equation*}
   \langle0\rangle_{t}L_{\overline{F}}(Q)=\langle0\rangle_{t}\,\,\text{if}\,\, L_{\overline{F}}(Q)\neq\Span\{\pmb{0}\},
   \end{equation*}
    i.e. if $Q(0)\neq0$ and $Q\neq 1$. Thus it is naturel to consider the following set
    \begin{equation*}
    \Theta=\{i/Q_{i}= 1\}.
     \end{equation*}
     Let
    \begin{eqnarray*}
\rho=\begin{cases}
\min\{s_{i}/i\in \Theta\}&\quad\text{if}\quad \Theta\neq\emptyset\\
\max\{s_{i}/1\leq i\leq m\}& \quad\text{otherwise}.
\end{cases}
\end{eqnarray*}
Note that since we have assumed that $P_{i}\neq1$, $1\leq i\leq m$, we have that $s_{i}\neq 0$ whenever
$i\in \Theta$. Using this remark it is straightforward to check that
$$
\prod_{i=1}^{m}L_{\overline{F}}(P_{i})=
\left\{\begin{array}{lll}
\langle0\rangle_{\rho}+ L_{\overline{F}}(\Upsilon(Q_{1},\ldots,Q_{m})) &\quad\text{if}\quad &\Theta =\emptyset\\
\langle0\rangle_{\rho} &\quad\text{if}\quad &\Theta \neq \emptyset
\end{array}\right.$$
and since $L_{\overline{F}}(\Upsilon(Q_{1},\ldots,Q_{m}))=0$ whenever one of the $Q_{i}$ is $1$, it follows that, even if $\Theta \neq \emptyset$, we have
$$\prod_{i=1}^{m}L_{\overline{F}}(P_{i})=\langle0\rangle_{\rho}+ L_{\overline{F}}(\Upsilon(Q_{1},\ldots,Q_{m})).$$
Since none of the roots of $\Upsilon(Q_{1},\ldots,Q_{m})$ is equal to zero, we have
\begin{eqnarray*}
\prod_{i=1}^{m}L_{\overline{F}}(P_{i})&=& \langle0\rangle_{\rho}+ L_{\overline{F}}(\Upsilon(Q_{1},\ldots,Q_{m}))\\
&=&L_{\overline{F}}(X^{\rho}\Upsilon(Q_{1},\ldots,Q_{m})),
\end{eqnarray*}
and consequently,
\begin{eqnarray*}
\prod_{i=1}^{m}L_{F}(P_{i})=L_{F}(X^{\rho}\Upsilon(Q_{1},\ldots,Q_{m})).
\end{eqnarray*}
Thus we have proved the following:
\begin{thrm}\label{Them 11} Let $P_{1}=X^{s_{1}}Q_{1},\ldots,P_{m}=X^{s_{m}}Q_{m}$ be nonconstant monic polynomials over $F$,
where $Q_{i}(0)\neq 0,\, 1\leq i\leq m$.
Then
 $$\prod_{i=1}^{m}L(P_{i})=L(X^{\rho}\Upsilon(Q_{1},\ldots,Q_{m})),$$
where
\begin{eqnarray*}
\rho=\begin{cases}
\min\{s_{i}/i\in \Theta\}&\quad\text{if}\quad \Theta\neq\emptyset\\
\max\{s_{i}/1\leq i\leq m\}& \quad\text{otherwise},
\end{cases}
\end{eqnarray*}
and $\Theta=\{i/Q_{i}= 1\}$.
\end{thrm}
The following result is the analogous of the theorem above for the Hurwitz product.
\begin{thrm}\label{thrrm} Let $P_{1},\ldots,P_{m}$ be nonconstant monic polynomials over $F$. Let $\Psi(P_{1},\ldots,P_{m})$ be the polynomial
obtained using the map
\begin{align*}
\xi_{m}: \overline{F}^{m}&\longrightarrow \mathbb{N}^{m}\\
(\lambda_{1},\ldots,\lambda_{m})&\longmapsto (\lambda_{1}(P_{1}),\ldots,\lambda_{m}(P_{m})).
 \end{align*}
 Then we have
 \begin{enumerate}[label=(\arabic*)]
 \item
\begin{equation*}
\Psi(P_{1},\ldots,P_{m})=\prod_{i\in T_{2}}(X-\widehat{\Phi}_{i})^{\Phi_{i}^{\xi_{m}}}
\end{equation*} is a polynomial over $F$.
\item $$L(P_{1})\ast\cdots\ast L(P_{m})=L(\Psi(P_{1},\ldots,P_{m})),$$
\end{enumerate}
\end{thrm}
\begin{proof}
One can derive the above result in similar manner as given for Theorem~\ref{thmrmrm} and~\ref{Them 11}. We omit the details.
\end{proof}

We end this section by the following remark, in which we point out the incorrectness of the main result, Theorem~$2.3$, of~\cite{Chin}, and we
identify mistakes that led to this incorrectness.
\begin{remark}\label{remer}~
\begin{enumerate}[label=(\arabic*)]
\item We point out that Lemma~$2.2$ of~\cite{Chin}, which is key in the proof of the main theorem in~\cite{Chin}, is not correct. Recall that this lemma says
\begin{lem}\label{lem1m}
Suppose that $t=t(i,j)< n\leq i+j$, then
\begin{enumerate}[label=(\roman*)]
\item  $\binom{t}{i}\neq0$,
\item $\binom{n}{i}=0$.
\end{enumerate}
\end{lem}
To check that this result is incorrect it is sufficient to take $F$ any field with characteristic $3$, $i=1$, $j=3$ and $n=4$. In this case,
$t=t(1,3)=3$, $\binom{t}{i}=0$ and $\binom{n}{i}\neq0$.
\item The Corollary~\ref{cor1r} can be regarded as a correct form of Lemma~\ref{lem1m}.
\item The assertion $(i)$ of Proposition~$2.3$ of~\cite{Chin} is not correct. The reason is the following:
\begin{enumerate}[label=(\alph*)]
\item
From the second line of the proof Proposition~$2.3$ of~\cite{Chin}, we have
$$d_{r}.d_{s}=\sum_{n=\max(r,s)}^{t}\binom{n}{s}\binom{s}{n-r}d_{n},$$
which is not true. The corrected identity is
$$d_{r}.d_{s}=\sum_{n=\max(r,s)}^{d(r,s)}\binom{n}{s}\binom{s}{n-r}d_{n},$$ where $d(r,s)=(r+1)\wedge(s+1)-1$.
\item From the line~$5$ and $6$, we have
$$d_{r}.d_{t-r}=\binom{t}{r}d_{t}+\ldots\,.\,\text{By Lemma~2.2(ii)},\,\binom{t}{r}\not\equiv0,$$
$$\text{and we have}\, d_{t}\in D_{r}.D_{s}.$$
This are two reasons this is not true. The first reason is that $\binom{t}{r}$ may be congruent to $0$ modulo $p=\text\rm{char}(F)>0$, as is noted
in~$(1)$ above.
\\The second reason is that even if this identity is correct, no result in the paper~\cite{Chin} guarantees that
$$\sum_{n=j}^{t-1}\binom{n}{t-r}\binom{t-r}{n-r}d_{n}=d_{r}.d_{t-r}-\binom{t}{r}d_{t},\,\, j=\max(r,t-r)$$
is an element of $D_{r}.D_{s}$.
\end{enumerate}
\item The following simple counter-example demonstrates that the main result in Chin and Goldman's paper is incorrect.
\\Let $F$ be any field with characteristic $3$, and let $i=1$, $j=3$. Then $1\wedge 3=3$ and $t(1-1,3-1)=t(0,2)=1$.
\\Using Chin and Goldman's theorem one obtain the following two results
$$L(x-1)L((X-1)^{3})=L((X-1)^{t(0,2)})=L(X-1)$$
and
$$L(x-1)\ast L((X-1)^{3})=L((X-2)^{t(0,2)})=L(X-2),$$
which clearly contradict the facts that
$$\pmb{\Lambda}_{0}\pmb{\Lambda}_{2}=\pmb{\Lambda}_{2}\not\in L(X-1)$$
and
$$\pmb{\Lambda}_{0}\ast\pmb{\Lambda}_{2}=2^{-2}\pmb{2}\pmb{\Lambda}_{2}=\pmb{2}\pmb{\Lambda}_{2}\not\in L(X-2).$$
\item Finally, we note that by comparing our main results with that of~\cite{Chin}, we see that the main result
of the aforementioned paper will become correct if we redefine the function $t$ as follows:
\begin{eqnarray*}
t(i,j)=\begin{cases}
i+j-1&\,\,\text{if}\,\, i=1 \,\,\text{or}\,\, j=1\\
(i+1)\wedge (j+1)& \,\,\text{otherwise}
\end{cases}
\end{eqnarray*}
and in the formula given $t_{0}$, the multiplicity of $0$ as a root of $h$, we replace $r(0)-1$ and $s(0)-1$ by $r(0)$ and $s(0)$.
\end{enumerate}
\end{remark}
\section{Some algebraic properties}
Let us begin this section with the following definition.
\begin{definition}
A mapping $f$ from semiring $(S,+,.)$ to semiring $(T,\oplus, \centerdot)$ is said to be a semiring homomorphism if for any $a,b\in S$
\begin{itemize}
\item $f(a+b)=f(a)\oplus f(b)$,
\item $f(a.b)=f(a)\centerdot f(b)$,
\item $f(0_{S})=0_{T}$ and $f(1_{S})=1_{T}$.
\end{itemize}
If moreover $f$ is bijective, then $f$ is said to be an isomorphism.
\end{definition}
In the following result we determine the invertible and the idempotent elements of $(\mathcal{L}_{F},+,.)$ and $(\mathcal{L}_{F},+,\ast)$. Before that we need to give the following lemma.
\begin{lem}\label{lem 197} Let $i,j$ be positive integers and $s,t$ nonnegative integers. Let $p=\text\rm{char}(F)$. Then
\begin{enumerate}[label=(\arabic*)]
\item Suppose $p>0$, then
\begin{enumerate}[label=(\roman*))]
\item $i\wedge j=i+j-1$ if and only if $q(i,j)=0$, where $q(i,j)$ is the integer defined in formula~\ref{for 21}.
\item $ip^{s}\wedge jp^{s}=p^{s}(i\wedge j)$.
\item $ip^{s}\wedge j$ is divisible by $p^{s}$.
\item $ip^{s}\wedge jp^{t}$ is divisible by $p^{s}\vee p^{t}$.
\end{enumerate}
\item \begin{enumerate}[label=(\roman*)]
\item $i\wedge j=1$ if and only if $i=j=1$.
\item Suppose that $p=0$. Then $i\wedge i=i$ if and only if $i=1$.
\item Suppose $p>0$ and $\textrm{gdc}(i,p)=1$. Then $i\wedge i=i$ if and only if $i=1$.
\item Suppose $p>0$. Then $i\wedge i=i$ if and only if $i=p^{n}$.
\end{enumerate}
\end{enumerate}
\end{lem}
\begin{proof}~
\begin{enumerate}[label=(\arabic*)]
\item\begin{enumerate}[label=(\roman*)]
\item is straightforward to check.
 \end{enumerate}
\noindent To prove $(4)$ and the remainder parts of $(3)$, let us consider the $p$-adic digit expansions of $i-1$ and $j-1$
$$
i-1=\sum_{m\geq 0}i_{m}p^{m}\quad\text{and}\quad
  j-1=\sum_{m\geq 0}j_{m}p^{m}.$$
\begin{enumerate}[label=(\roman*),resume]
\item Let $q=q(i,j)$ be the smallest nonnegative integer such that $i_{m}+j_{m}<p$ for all $m\geq q$. Since
\begin{equation*}
ip^{s}-1=(i-1)p^{s}+p^{s},
 \end{equation*}
 it follows that
\begin{eqnarray*}
 ip^{s}-1&=&\sum_{m\geq 0}(ip^{s})_{m}\\
 &=&p-1+(p-1)p+\cdots+(p-1)p^{s-1}+\sum_{m\geq 0}i_{m}p^{m+s}
 \end{eqnarray*}
  is the $p$-adic digit expansion of $ip^{s}-1$. Similarly,
  \begin{eqnarray*}
  jp^{s}-1&=&\sum_{m\geq 0}(jp^{s})_{m}\\
  &=&p-1+(p-1)p+\cdots+(p-1)p^{s-1}+\sum_{m\geq 0}j_{m}p^{m+s}
  \end{eqnarray*}
   is the $p$-adic digit expansion of $jp^{s}-1$. It is clair that $q+s$ is the smallest nonnegative integer such that
   \begin{equation*}
   (ip^{s})_{m}+(jp^{s})_{m}<p \quad\text{for all}\quad m\geq q+s,
   \end{equation*}
   i.e., $q+s=q(ip^{s},jp^{s})$.
    Thus
   \begin{eqnarray*}
   ip^{s}\wedge jp^{s}&=&p^{s+q}+\sum_{m\geq q}(i_{m}+j_{m})p^{s+m}\\
   &=&p^{s}(i\wedge j).
  \end{eqnarray*}
\item Let $q=q(ip^{s},j)$ be the smallest nonnegative integer such that
\begin{equation*}
(ip^{s})_{m}+j_{m}<p \quad\text{for all}\quad m\geq q.
\end{equation*}
If $q\geq s$, then $ip^{s}\wedge j$, which is divisible by $p^{q}$, is certainly divisible by $p^{s}$. Suppose that $q\leq s-1$. Since
\begin{equation*}
p-1+(p-1)p+\cdots+(p-1)p^{s-1}+\sum_{m\geq 0}i_{m}p^{m+s}
\end{equation*}
 is the $p$-adic digit expansion of $ip^{s}-1$ and
\begin{equation*}
 j_{0}+j_{1}p+\cdots+j_{s-1}p^{s-1}+\sum_{m\geq 0}j_{m+s}p^{m+s}
\end{equation*}
  is that of $j-1$, it follows that
  \begin{equation*}
  j_{q}=\cdots=j_{s-1}=0
  \end{equation*}
   and then
  \begin{eqnarray*}
   ip^{s}\wedge j&=&p^{q}+(p-1)p^{q}+\cdots+(p-1)p^{s-1}+\sum_{m\geq 0}(i_{m}+j_{m+s})p^{m+s}\\
   &=&p^{s}+p^{s}\sum_{m\geq 0}(i_{m}+j_{m+s})p^{m}
  \end{eqnarray*}
    is divisible by $p^{s}$. In both cases $ip^{s}\wedge j$ is divisible by $p^{s}$, as was to be shown.
     \item Follows immediately from $(ii)$ and $(iii)$.
     \end{enumerate}
    \end{enumerate}
\begin{enumerate}[label=(\arabic*),resume]
\item\begin{enumerate}[label=(\roman*)]
\item follows obviously from the fact that $1\leq i\vee j\leq i\wedge j$.
\item is obvious.
\item follows from the fact that $i\wedge i=p^{q(i-1,i-1)}+2\sum_{m\leq q(i-1,i-1)}i_{m}p^{m}$ which is divisible by $p$ if $q(i-1,i-1)\neq0$.
\item It is clear that $p^{n}\wedge p^{n}=p^{n}(1\wedge 1)=p^{n}$. Suppose that $i\wedge i=i$. Put $i=p^{n}s$ where $\textrm{gcd}(s,p)=1$. We have
$i\wedge i=p^{n}(s\wedge s)=p^{n}s$, then $s\wedge s=s$, hence $s=1$ and $i=p^{n}$.
\end{enumerate}
\end{enumerate}
\end{proof}
\begin{remark}
For another proof of Part $(1)(iv)$, see~\cite[p.213]{Gott}.
\end{remark}
\begin{thrm} Let $P$ and $Q$ two non constant monic polynomials over $F$ and let $p=\text\rm{char}(F)$. Then
\begin{enumerate}[label=\arabic*.]
\item $L(P)$ is invertible for the Hadamard product if and only if $P=X-\alpha$ where $\alpha\in F^{\star}$.
\item $L(P)$ is invertible for the Hurwitz product if and only if $P=X-\alpha$ where $\alpha\in F$.
\item If $\text\rm{char}(F)=0$ then:
$$L(P)L(P)=L(P)\quad\text{if and only if}\,\, P=X^{t} \,\,\text{or}\quad P=X^{m}(X^{n}-1)$$
 where $t,n$ are positive integers and $m$ is a nonnegative integer.
    \\If $\text\rm{char}(F)=p>0$ then:
    $$L(P)L(P)=L(P)\,\,\text{if and only if}\quad P=X^{t} \,\,\text{or}\quad P=X^{m}(X^{n}-1)^{p^{s}}=X^{m}(X^{np^{s}}-1)$$
 where $t,n$ are positive integers and $m,s$ are nonnegative integers.
\item If $\text\rm{char}(F)=0$ then:
$$L(P)\ast L(P)=L(P)\quad\text{if and only if}\quad P=X.$$
\\If $\text\rm{char}(F)=p>0$ then:
$$L(P)\ast L(P)=L(P)\quad\text{if and only if}\quad P=(X^{p^{s}}+a_{1}X^{p^{s-1}}+\cdots+a_{s}X)$$
where $s$ is a nonnegative integer, i.e., $P$ is a $p$-polynomial.
\end{enumerate}
\end{thrm}
\begin{proof}~
\begin{enumerate}[label=\arabic*.]
\item It is clear that $L(X-\alpha)L(X-\alpha^{-1})=L(X-1)$, i.e, $L(X-\alpha)$ is invertible for the Hadamard product. Suppose that there exists $L(Q)\in \mathcal{L}_{F}$ such that $L(P)L(Q)=L(X-1)$. Fix a root $\alpha$ of $P$ and let $\beta$ be a root of $Q$. Then $\alpha\beta=1$, hence $\alpha\in F^{\star}$ and $Q$ admits only $\alpha^{-1}$ as root. For the same reason, $P$ admits one root. Hence $P=X-\alpha$ where $\alpha\in F^{\star}$.
\item Suppose that there exists $L(Q)\in \mathcal{L}_{F}$ such that $L(P)\ast L(Q)=L(X)$. Fix a root $\alpha$ of $P$ and let $\beta$ be a root of $Q$. Then $\alpha+\beta=0$, i,e., $\beta=-\alpha$. $Q$ admits only $-\alpha$ as root. For the same reason, $P$ admits one root. Hence $P=(X-\alpha)^{n}$ and $Q=(X+\alpha)^{m}$. Now since $L(P)\ast L(Q)=L(X^{n\wedge m})$ it follows that $n\wedge m=1$, and then $n=m=1$. The converse is obvious.
\item Suppose that $L(P)L(P)=L(P)$ and put $P=X^{m}Q$, $Q(1)\neq0$.
Let $G$ be the set of all non-zero roots of $P$. Suppose that $G\neq\emptyset$. It is clear, from Theorem~\ref{them 11}, that $G$ is closed under multiplication. Let $\alpha\in G$. Then $\alpha,\alpha^{2},\alpha^{3},\ldots$ are all in $G$. Since $G$ is finite, there exists a positive integer $n$ such that $\alpha^{n}=1$, in particular $\alpha^{-1}=\alpha^{n-1}\in G$. Hence $G$ is a finite subgroup of the multiplicative group of $\overline{F}$. then $G$ is cyclic, i.e., there is $\alpha\in G$ such that $G=\{1,\alpha,\ldots,\alpha^{n-1}\}$ where $n$ is the order of $G$. Consequently, $Q=\prod_{q=1}^{n}(X-\alpha^{q-1})^{m_{q}}$.
Now since $L(P)L(P)=L(P)$, it follows that $L(\Upsilon(Q,Q))=L(Q)$. Let $i\in S_{1}$ such that $\widehat{\Omega}_{i}=1$. Then $\Omega_{i}=\{(1,1),(\alpha,\alpha^{n-1}),\ldots,(\alpha^{n-1},\alpha)\}$. Hence $\widehat{\Omega}_{i}^{\xi_{2}}=\max\{m_{1}\wedge m_{1},m_{2}\wedge m_{n},\ldots,m_{n}\wedge m_{2}\}=m_{1}$. Since $m_{1}\leq m_{1}\wedge m_{1}$, it follows that $m_{1}\wedge m_{1}=m_{1}$ and that $m_{q}\wedge m_{n-q+2}\leq m_{1}, q=2,\ldots,n$. By the same way we show that $m_{1}\wedge m_{q}=m_{q}, q=2,\ldots,n$.
\\If $\text\rm{char}(F)=0$: we have $m_{1}=1$ and then $m_{2}=\cdots=m_{n}=1$. Therefore, $P=X^{m}\prod_{q=1}^{n}(X-\alpha^{q-1})=X^{m}(X^{n}-1)$. It is obvious that this polynomial satisfies $L(P)L(P)=L(P)$.
\\If $\text\rm{char}(F)=p>0$: since $m_{1}\wedge m_{1}=m_{1}$, we have $m_{1}=p^{s}$, and since $m_{1}\wedge m_{q}=m_{q}$, we have $p^{s}\wedge m_{q}=m_{q}$. Then $m_{q}=tp^{s}$. But since $m_{q}\leq m_{q}\wedge m_{n-q+2}\leq m_{1}$, it follows that $t=1$. Therefore, $P=X^{m}\prod_{q=1}^{n}(X-\alpha^{q-1})^{p^{s}}=X^{m}(X^{n}-1)^{p^{s}}$. A straightforward verification proves that this polynomial satisfies $L(P)L(P)=L(P)$.
\item Suppose that $L(P)\ast L(P)=L(P)$. Let $G$ be the set of all roots of $P$. Suppose that $G\neq0$. It is clear, from Theorem~\ref{thrrm}, that $G$ is closed under multiplication. Let $\alpha\in G$. Theorem~\ref{thrrm} entails that $\alpha,2\alpha,3\alpha,\ldots$ are all in $G$. This is not possible only if there exists a positive integer $m$ such that $m\alpha=0$. In particular, $m$ has an opposite element in $G$. Hence $G$ is a subgroup of the additive group of $\overline{F}$. Therefore $P$ is a $p$-polynomial (e.g. see Exercise~$3$, pp.~$411$ of \cite{Jac}). A straightforward verification using Theorem~\ref{thrrm} shows that the converse holds.
\end{enumerate}
\end{proof}
Let us now fix an algebraic closure $\overline{F}$ of $F$. Let $\End(F)$ be the set of all filed endomorphisms of $F$ and put $\Inj(F)=\End(F)\backslash\{0\}$. Let $f\in \End(F)$. We know that $f$ can be extended to an endomorphism of $\overline{F}$. Let $\chi_{f} : F[X]\longrightarrow F[X]$ $P=a_{n}X^{n}+\cdots+a_{0}$, $\chi_{f}(P)=f(a_{n})X^{n}+\cdots+f(a_{0})$. Suppose $P=a(X-\alpha_{1})^{n_{1}}\cdots(X-\alpha_{m})^{n_{m}}$. Let $g,h\in \End(\overline{F})$ be extensions of $f$. Then $g(P)=g(a)(X-g(\alpha_{1}))^{n_{1}}\cdots(X-g(\alpha_{m}))^{n_{m}}=\chi_{f}(P)$ and $h(P)=h(a)(X-h(\alpha_{1}))^{n_{1}}\cdots(X-h(\alpha_{m}))^{n_{m}}=\chi_{f}(P)$. Let us not $\chi_{f}$ simply by $f$. Now fix $\widetilde{f}\in \Inj(\overline{F})$ that extends $f$. Consider the map
$$\varphi_{f}:(\mathcal{L}_{F},+,.)\longrightarrow (\mathcal{L}_{F},+,.)$$ defined by $\varphi_{f}(L(P))=L(f(P))$.
\\It is easy to prove that $\varphi_{f}$ is injective, $\varphi_{f\circ g}=\varphi_{f}\circ\varphi_{g}$ and that $f=g$ whenever $\varphi_{f}=\varphi_{g}$.
Moreover, we have the following theorem which proves that $\varphi_{f}$ is a semiring endomrphism.
\\If $f: F\longrightarrow F$ is an endomorphism we also use $f$ to denote the induced endomorphism
 \begin{align*}
F[X]&\longrightarrow F[X]\\
\sum a_{i}X^{i}&\longmapsto \sum f(a_{i})X^{i}.
 \end{align*}
 \begin{lem}\label{lem1012} Let $P$ and $Q$ be non-constant monic polynomials over $F$ with non-zero terms and let $f\in \Inj(F)$. Consider the following mapping
\begin{align*}
\xi_{2}: \overline{F}^{2}&\longrightarrow \mathbb{N}^{2}\\
(\mu,\lambda)&\longmapsto (\mu(P),\lambda(Q)).
 \end{align*}
 and
 \begin{align*}
\xi^{f}_{2}: \overline{F}^{2}&\longrightarrow \mathbb{N}^{2}\\
(\mu,\lambda)&\longmapsto (\mu(f(P)),\lambda(f(Q)).
 \end{align*}
Let $$S=\{\widehat{\Omega}_{i}/\Omega_{i}^{\xi_{2}}\neq0\}=\{\widehat{\Omega}_{i}/\exists\alpha, \beta: \widehat{\Omega}_{i}=\alpha\beta\}$$
and $$S^{f}=\{\widehat{\Omega}_{i}/\Omega_{i}^{\xi^{f}_{2}}\neq0\}=\{\widehat{\Omega}_{i}/\exists\alpha, \beta: \widehat{\Omega}_{i}=\widetilde{f}(\alpha)\widetilde{f}(\beta)\}.$$
Then the map $\widetilde{f}$ induces a bijection from $S$ onto $S^{f}$ satisfying $\widetilde{f}(\Omega_{i})^{\xi^{f}_{2}}=\Omega_{i}^{\xi_{2}}$.
 \end{lem}
 \begin{proof}
Let $\alpha$ and $\beta$ range over the roots of $P$ and $Q$ in $\overline{F}$, with multiplicities $\alpha(P)$ and $\beta(Q)$, respectively.
\\Since $\widetilde{f}$ is injective, $\widetilde{f}(\alpha)$ and $\widetilde{f}(\beta)$ range over the roots of $f(P)$ and $f(Q)$ in $\overline{F}$, with multiplicities $\alpha(P)$ and $\beta(Q)$, respectively. In other word, $\xi_{2}(\alpha,\beta)=\xi^{f}_{2}(\widetilde{f}(\alpha),\widetilde{f}(\beta)).$
\\Using the fact that $\widetilde{f}$ is injective, it is easily seen that $\mathbf{card}(S)=\mathbf{card}(S^{f})$ and that the map $\widehat{\Omega}_{i}\longmapsto \widetilde{f}(\widehat{\Omega}_{i})$ is a bijection from $S$ onto $S^{f}$ satisfying $\widetilde{f}(\Omega_{i})^{\xi^{f}_{2}}=\Omega_{i}^{\xi_{2}}$.
 \end{proof}
\begin{lem}\label{lemr} Under the same hypothesis as Lemma~\ref{lem1012}, we have
 $$f(\Upsilon(P,Q))=\Upsilon(f(P),f(Q)).$$
\end{lem}
\begin{proof}
Put $S_{1}=\{i/\Omega_{i}^{\xi_{2}}\neq 0\}$ and $S_{1}^{f}=\{i/\Omega_{i}^{\xi^{f}_{2}}\neq0\}$.
From Theorem~\ref{Them 11}, we have
\begin{equation*}
\Upsilon(P,Q)=\prod_{i\in S_{1}}(X-\widehat{\Omega}_{i})^{\Omega_{i}^{\xi_{2}}}.
\end{equation*}
and
\begin{equation*}
\Upsilon(f(P),f(Q))=\prod_{i\in S_{1}^{f}}(X-\widehat{\Omega}_{i})^{\Omega_{i}^{\xi^{f}_{2}}}.
\end{equation*}
\begin{eqnarray*}
f(\Upsilon(P,Q))&=&f(\prod_{i\in S_{1}}(X-\widehat{\Omega}_{i})^{\Omega_{i}^{\xi_{2}}})\\
&=&\prod_{i\in S_{1}}(X-\widetilde{f}(\widehat{\Omega}_{i}))^{\Omega_{i}^{\xi_{2}}}\\
&=&\prod_{i\in S_{1}^{f}}(X-\widetilde{f}(\widehat{\Omega}_{i}))^{\widetilde{f}(\Omega_{i})^{\xi^{f}_{2}}}\\
&=&\Upsilon(f(P),f(Q)).
\end{eqnarray*}
\end{proof}
Let $\Inj(\mathcal{L}_{F})$ be the set of all injective semiring endomorphisms of $\mathcal{L}_{F}$. Then we have
\begin{lem}
Let $\psi\in \Inj(\mathcal{L}_{F})$. Then the following two conditions are equivalent:
\begin{enumerate}[label=(\roman*)]
\item $\psi(L((X-1)^{n})=L((X-1)^{n})$ for all $n\in \mathbb{N}$;
\item There exists $f\in \Inj(F)$, such that $\psi=\varphi_{f}$.
\end{enumerate}
\end{lem}
\begin{proof}
Obviously the condition $(ii)$ implies $(i)$. Suppose that the condition $(i)$ holds and let $f\in \Inj(F)$ be the unique map satisfying $\psi(L(X-\alpha))=\varphi_{f}(L(X-\alpha))$ for all $\alpha\in F$. We have $$\psi(L((X-\alpha)^{n}))=\psi(L((X-1)^{n})L(X-\alpha))=\psi(L((X-1)^{n})\varphi(L(X-\alpha))=$$
$$L((X-1)^{n}))L(X-f(\alpha))=L((X-f(\alpha))^{n})=\varphi_{f}(L((X-\alpha)^{n}).$$
Hence $\psi(L(P))=\varphi_{f}(L(P))$ for all non-constant monic polynomial $P$ over $F$. Thus $\psi=\varphi_{f}$.
\end{proof}
\begin{thrm}
Let $\varphi$ defined by
\begin{align*}
\varphi: \Inj(F)&\longrightarrow \Inj(\mathcal{L}_{F})\\
f&\longmapsto \varphi_{f}.
 \end{align*}
Then $\varphi$ is an injective homomorphism of monoids.
\end{thrm}
\begin{proof} Obviously $\varphi$ is injective and $\varphi(\textrm{id}_{F})=\textrm{id}_{\mathcal{L}_{F}}$.
\\Let $P_{1}=X^{n_{1}}Q_{1}\neq1$ and $P_{2}=X^{n_{2}}Q_{2}\neq1$ where $Q_{1}$ and $Q_{2}$ are polynomials with non-zero constant terms.
\begin{eqnarray*}
L(P)+L(Q)&=&L(\prod_{\lambda\in F}(X-\lambda)^{\lambda(P)})+L(\prod_{\lambda\in F}(X-\lambda)^{\lambda(Q)})\\
&=&L(\prod_{\lambda\in F}(X-\lambda)^{\lambda(P)\vee\lambda(Q)}).
\end{eqnarray*}
Then
\begin{eqnarray*}
\varphi_{f}(L(P)+L(Q))&=&L(\prod_{\lambda\in F}(X-\widetilde{f}(\lambda))^{\lambda(P)\vee\lambda(Q)})\\
&=&L(\prod_{\lambda\in F}(X-\widetilde{f}(\lambda))^{\lambda(P)})+L(\prod_{\lambda\in F}(X-\widetilde{f}(\lambda))^{\lambda(Q)})\\
&=&L(f(P))+L(f(Q)).
\end{eqnarray*}
Hence
$$\varphi_{f}(L(P)+L(Q))=\varphi_{f}(L(P))+\varphi_{f}(L(Q)).$$
Using Theorem~\ref{Them 11}, we have
\begin{equation*}
L(P_{1})L(P_{2})=L(X^{\rho}\Upsilon(Q_{1},Q_{2})),
 \end{equation*}
where
\begin{eqnarray*}
\rho=\begin{cases}
\min\{n_{i}/i\in \Theta\}&\quad\text{if}\quad \Theta\neq\emptyset\\
\max\{n_{1}, n_{2}\}& \quad\text{otherwise}.
\end{cases}
\end{eqnarray*}
and $\Theta=\{i/Q_{i}=1\}$. Then, from Lemma~\ref{lemr}, we have
\begin{equation*}
\varphi_{f}(L(P_{1})L(P_{2}))=L(X^{\rho}\Upsilon(f(Q_{1}),f(Q_{2})).
 \end{equation*}
On the other hand, $$\varphi_{f}(L(P_{1}))\varphi_{f}(L(P_{2}))=L(X^{n_{1}}f(Q_{1}))L(X^{n_{2}}f(Q_{2})).$$ But since $\{i/Q_{i}=1\}=\{i/f(Q_{i})=1\}$, it follows that
$$\varphi_{f}(L(P_{1}))\varphi_{f}(L(P_{2}))=L(X^{\rho}\Upsilon(f(Q_{1}),f(Q_{2}))).$$ Therefore, $$\varphi_{f}(L(P_{1})L(P_{2}))=\varphi_{f}(L(P_{1}))\varphi_{f}(L(P_{2})).$$
This completes the proof of the theorem.
\end{proof}
The following result proves that the map $\varphi$ is not in general a bijection.
\begin{lem}
Let $\alpha\in F$ and let $f\in \Inj(F)$. Put
$$P_{n}(X)=\prod_{i=1}^{n}(X-\alpha^{i-1})$$ and
$$\overline{(\alpha,n)}=\{1,\alpha,\ldots,\alpha^{n}\}.$$
Suppose that $\mathbf{card}(\overline{(\alpha,n)})=n+1$ for all $n\in \mathbb{N}$ and let $\psi_{\alpha}: \mathcal{L}_{F}\longrightarrow \mathcal{L}_{F}$ be the injective map defined by
\begin{eqnarray*}
\begin{cases}
\psi_{\alpha}(L(X^{n}))=L(X^{n}) \forall n\in \mathbb{N}\\
\psi_{\alpha}(L((X-\lambda)^{n}))=L(X-f(\lambda))L(P_{n}(X))\\
\hspace*{7.6pc}=\Upsilon(X-f(\lambda),P_{n}(X)) \forall n\in \mathbb{N}\lambda\in F^{\star} \forall n\in \mathbb{N}\\
\psi_{\alpha}(L((X-\lambda_{1})^{n_{1}}\cdots(X-\lambda_{m})^{n_{m}}))=\sum_{i=1}^{m}\psi_{\alpha}(L((X-\lambda_{1})^{n_{i}}))
\end{cases}
\end{eqnarray*}
Then $\psi_{\alpha}\in \Inj(\mathcal{L}_{F})$ if and only if $\textrm{char}(F)=0$.
\end{lem}
\begin{proof} It is easily seen that $L(P_{n}(X))L(P_{m}(X))=L(P_{n+m-1}(X))$.\\
We have $\psi_{\alpha}(L((X-1)^{2}))=L(P_{2}(X))$. Then $$\psi_{\alpha}(L((X-1)^{2\wedge 2}))=L(P_{2}(X))L(P_{2}(X))=L(P_{3}(X)).$$ Hence $2\wedge 2=3$. So $$\psi_{\alpha}(L((X-1)^{3\wedge 3}))=L(P_{3}(X))L(P_{3}(X))=L(P_{5}(X)).$$ Hence $3\wedge 3=5$. A simple induction on $n$ shows that $n\wedge n=2n-1$ for all $n\in \mathbb{N}$. Then $\textrm{char}(F)=0$.
\\Conversly, suppose that $\textrm{char}(F)=0$ which means that $n\wedge n=2n-1$ for all $n\in \mathbb{N}$. Let $P_{1}=X^{n_{1}}Q_{1}\neq1$ and $P_{2}=X^{n_{2}}Q_{2}\neq1$ where $Q_{1}=$ and $Q_{2}$ are polynomials with non-zero constant terms.
\\We have $$\psi_{\alpha}(L(P_{1})L(P_{2}))=\psi_{\alpha}(L(X^{\rho}\Upsilon(Q_{1},Q_{2})))=L(X^{\rho})+\psi_{\alpha}(L(\Upsilon(Q_{1},Q_{2}))).$$
On the other hand, $$\psi_{\alpha}(L(\Upsilon(Q_{1},Q_{2})))=\Sigma_{i\in S_{1}}\varphi_{\alpha}(L(X-\widehat{\Omega}_{i})^{\Omega_{i}^{\xi_{2}}})$$
$$=\Sigma_{i\in S_{1}}L(X-\widetilde{f}(\widehat{\Omega}_{i}))L(P_{\Omega_{i}^{\xi_{2}}}(X))$$
$$=\Sigma_{i\in S_{1}}L(X-\widetilde{f}(\widehat{\Omega}_{i}))L(P_{\Omega_{i}^{\xi_{2}}}(X))$$
Then $$\psi_{\alpha}(L(P_{1})L(P_{2}))=L(X^{\rho})+\Sigma_{i\in S_{1}}L(X-\widetilde{f}(\widehat{\Omega}_{i}))L(P_{\Omega_{i}^{\xi_{2}}}(X))$$
$$\psi_{\alpha}(L(P_{1}))=L(X^{n_{1}})+\Sigma_{\beta(P_{1})\neq0}L(X-\widetilde{f}(\beta))L(P_{\beta(P_{1})}(X))$$
$$\psi_{\alpha}(L(P_{2}))=L(X^{n_{2}})+\Sigma_{\lambda(P_{2})\neq0}L(X-\widetilde{f}(\lambda))L(P_{\lambda(P_{2})}(X))$$
Since $Q_{i}=1$ if and only if $\psi_{\alpha}(L(P_{1}))=L(X^{n_{1}})$, we have
\begin{eqnarray*}
&\psi_{\alpha}(L(P_{1}))\psi_{\alpha}(L(P_{2}))=L(X^{\rho})+\\
\hspace*{1pc}&(\Sigma_{\beta(P_{1})\neq0}L(X-\widetilde{f}(\beta))L(P_{\beta(P_{1})}(X)))(\Sigma_{\lambda(P_{2})\neq0}
L(X-\widetilde{f}(\lambda))L(P_{\lambda(P_{2})}(X)))\\
&=L(X^{\rho})+\Sigma_{\beta(P_{1})\neq0,\lambda(P_{2})\neq0}L(X-\widetilde{f}(\beta)\widetilde{f}(\lambda))L(P_{\beta(P_{1})}(X))L(P_{\lambda(P_{2})}(X))\\
&=L(X^{\rho})+\Sigma_{\beta(P_{1})\neq0,\lambda(P_{2})\neq0}L(X-\widetilde{f}(\beta\lambda))L(P_{\beta(P_{1})+\lambda(P_{2})-1}(X))\\
&=L(X^{\rho})+\Sigma_{i\in S_{1}}L(X-\widetilde{f}(\widehat{\Omega}_{i}))[\sum_{(\beta,\lambda)\in \Omega_{i}} L(P_{\beta(P_{1})\wedge\lambda(P_{2})}(X))]
\end{eqnarray*}
Since $P(n)(X)$ divise $P(m)(X)$ whenever $n\leq m$, we have
$$\sum_{(\beta,\lambda)\in \Omega_{i}} L(P_{\beta(P_{1})\wedge\lambda(P_{2})}(X))=L(P_{\Omega_{i}^{\xi_{2}}}(X)).$$
Thus
\begin{eqnarray*}
&\psi_{\alpha}(L(P_{1}))\psi_{\alpha}(L(P_{2}))=L(X^{\rho})+\\
\hspace*{1pc}&=(\Sigma_{\beta(P_{1})\neq0}L(X-\widetilde{f}(\beta))L(P_{\beta(P_{1})}(X)))(\Sigma_{\lambda(P_{2})\neq0}
L(X-\widetilde{f}(\lambda))L(P_{\lambda(P_{2})}(X)))\\
&=L(X^{\rho})+\Sigma_{i\in S_{1}}L(X-\widetilde{f}(\widehat{\Omega}_{i}))L(P_{\Omega_{i}^{\xi_{2}}}(X))
\end{eqnarray*}
By conclusion, we have $\psi_{\alpha}(L(P_{1})L(P_{2}))=\psi_{\alpha}(L(P_{1}))\psi_{\alpha}(L(P_{2}))$.
\\To complete the proof, it is straightforward to verify that
$$\psi_{\alpha}(L(P_{1})+L(P_{2}))=\psi_{\alpha}(L(P_{1}))+\psi_{\alpha}(L(P_{2}))\quad\text{and}\quad \psi_{\alpha}(L(X-1))=L(X-1);$$
we omit the details here.
\end{proof}

\end{document}